\documentclass[11pt]{amsart}
\usepackage[shortalphabetic]{amsrefs}
\usepackage[utf8]{inputenc}
\usepackage{amsthm}
\usepackage[utf8]{inputenc}
\usepackage{appendix}
\usepackage{cite}
\usepackage{amssymb,amsmath,amsthm}
\usepackage{hyperref}
%\usepackage{mathtools}
%\usepackage[spanish]{babel} 
%\usepackage[all]{xy}
%\usepackage[all,color]{xypic}
%\usepackage{xcolor}
%\xyoption{arc}
%\usepackage{subfig}
%\usepackage[enableskew]{youngtab}
%\usepackage{ytableau}

%\epigraphfontsize{\small\itshape}
%\setlength\epigraphwidth{8cm}
%\setlength\epigraphrule{0pt}

%\setlength{\oddsidemargin}{0.15in} \setlength{\evensidemargin}{0cm}
%\setlength{\marginparwidth}{28mm}
%\setlength{\marginparsep}{28mm}
%\setlength{\marginparpush}{25mm}
%\setlength{\topmargin}{0in}
%\setlength{\headheight}{0pt}
%\setlength{\headsep}{15mm}    
%\setlength{\textheight}{21cm}
%\setlength{\textwidth}{6.0in}
%\setlength{\parskip}{4pt}

%\makeatletter
%\renewcommand*\env@matrix[1][*\c@MaxMatrixCols c]{%
%  \hskip -\arraycolsep
%  \let\@ifnextchar\new@ifnextchar
%  \array{#1}}
%\makeatother

\newcommand{\amsprimary}[1]{{\footnotesize\noindent AMS 2010 \textit{Mathematics subject
classification:} Primary #1\vspace{1pc}}}
\newcommand{\keywordsnames}[1]{{\footnotesize\noindent\textit{Key words:} #1\vspace{1pc}}}

\newtheorem{theorem}{Theorem}

\newtheorem{teo}{Theorem}

\newtheorem{lemma}[teo]{Lemma}

\theoremstyle{definition}

\theoremstyle{remark}

\title[]{On Milnor's criterion for deciding whether a surface is hyperbolic or parabolic for biharmonic functions}
\author{John E. Bravo \and Jean C. Cortissoz }
\email{jcortiss@uniandes.edu.co}
\address{Department of Mathematics, Universidad de los Andes, Bogot\'a DC, Colombia}
\date{}

\begin{document}

\maketitle

\begin{abstract}
In this paper we generalise a celebrated result of Milnor that characterises whether a 
rotationally symmetric surface is
parabolic or hyperbolic to the case of biharmonic functions.
\end{abstract}

\keywordsnames{Liouville's Theorem; bounded harmonic functions}

{\amsprimary {31C05, 53C21}}

\tableofcontents

\section{Introduction}

In 1977 Milnor in \cite{Mil} published a criterion (which 
according to himself surprised him), based on the curvature to decide
whether $\mathbb{R}^2$ with a metric of the form
\begin{equation}
\label{eq:rotationally_symmetric}
g=dr^2+\phi\left(r\right)^2 d\theta^2.
\end{equation}
was either hyperbolic (it admits bounded nonconstant harmonic functions) or parabolic
(every bounded harmonic function must be constant).
Namely he showed the following.
\begin{theorem}
    Let $\mathbb{R}^2$ with a complete metric of the form (\ref{eq:rotationally_symmetric}).
    If the curvature is greater than or equal to $-1/r^2\log r$, then every bounded
    harmonic function must be constant. On the other hand, if there
    is an $\epsilon>0$ such that the curvature outside a compact
    subset of $\mathbb{R}^2$ is less than or equal to $-\left(1+\epsilon\right)/r^2\log r$,
    then there are non constant bounded harmonic functions.
\end{theorem}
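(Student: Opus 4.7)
The plan is to reduce the problem to the convergence of the single integral $I=\int^{\infty}dr/\phi(r)$ and then analyze an ODE for $\phi$ under each curvature assumption. In the given coordinates the Laplacian of a radial function equals $\Delta u=(\phi u')'/\phi$, so a bounded non-constant \emph{radial} harmonic function exists if and only if $I<\infty$. A standard averaging argument---rotations act by isometries, so Fourier decomposition plus a capacity/maximum-principle analysis of the higher angular modes---upgrades this to the statement that the surface is hyperbolic if and only if $I<\infty$. Since $K=-\phi''/\phi$, the two hypotheses read
\[
\phi''(r)\leq \frac{\phi(r)}{r^{2}\log r}\qquad\text{and}\qquad \phi''(r)\geq \frac{(1+\epsilon)\,\phi(r)}{r^{2}\log r}
\]
respectively, outside a compact set. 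The natural comparison functions are $\psi_{\beta}(r):=r(\log r)^{\beta}$, which satisfy $\psi_{\beta}''/\psi_{\beta}=\beta/(r^{2}\log r)+\beta(\beta-1)/(r^{2}\log^{2}r)$; moreover $\int^{\infty}dr/\psi_{\beta}$ diverges for $\beta\leq 1$ and converges for $\beta>1$.

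For the parabolic direction, I would write $\phi=\lambda\,\psi_{1}$ and use the reduction-of-order identity
\[
\phi''-\frac{\psi_{1}''}{\psi_{1}}\phi \;=\; \frac{(\psi_{1}^{2}\lambda')'}{\psi_{1}}.
\]
Combined with the hypothesis and the formula for $\psi_{1}''/\psi_{1}$, this yields $(\psi_{1}^{2}\lambda')'\leq 0$ outside a compact set. Hence $\psi_{1}^{2}\lambda'$ is non-increasing and bounded above; since $\int^{\infty}dr/\psi_{1}^{2}<\infty$, integrating shows $\lambda$ is bounded above, whence $\phi(r)\leq C\,r\log r$ and $I=\infty$, so the surface is parabolic.

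For the hyperbolic direction, fix $0<\delta<\epsilon$ and set $v_{0}(r)=1/r+(1+\delta)/(r\log r)$, the logarithmic derivative (to leading order) of $\psi_{1+\delta}$. With $v=\phi'/\phi$ and $y=v-v_{0}$, the Riccati equations give
\[
y'+(v+v_{0})\,y \;\geq\; \eta(r),\qquad \eta(r)\asymp \frac{\epsilon-\delta}{r^{2}\log r}\quad(r\gg 1).
\]
Since $\phi''\geq (1+\epsilon)\phi/(r^{2}\log r)>0$ forces $\phi'$ eventually positive, we have $v>0$ and hence $v+v_{0}\gtrsim 1/r$; this gives $e^{F(r)}\gtrsim r(\log r)^{1+\delta}$ where $F:=\int(v+v_{0})\,ds$, and therefore $\eta(s)\,e^{F(s)}\gtrsim (\log s)^{\delta}/s$, whose integral over $s$ diverges. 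Integrating the inequality $(e^{F}y)'\geq \eta\,e^{F}$ thus forces $y$ to become strictly positive at some finite radius, and the same inequality then propagates $y\geq 0$ to all larger $r$; integrating $v\geq v_{0}$ yields $\phi(r)\geq c\,r(\log r)^{1+\delta}$ and hence $I<\infty$, so the surface is hyperbolic. The delicate point---and the main obstacle---is exactly this positivity-propagation step: the naive choice $\beta=1+\epsilon$ would give a Wronskian whose sign goes the wrong way, so one must stand off at $\beta=1+\delta<1+\epsilon$ and exploit the surplus $\epsilon-\delta$ together with the divergence of $\int (\log s)^{\delta}/s\,ds$.
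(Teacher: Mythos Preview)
The statement you are proving is Milnor's original 1977 theorem, which the paper quotes as background and attributes to \cite{Mil}; the paper does \emph{not} supply its own proof of it. So there is no ``paper's proof'' to compare against. That said, the machinery the paper does develop for its biharmonic results (the Sturm Comparison Lemma applied to $\phi'/\phi$, and Fourier decomposition of solutions into modes $\varphi_m(r)e^{im\theta}$ with $\varphi_m=\exp\bigl(\int |m|/\phi\bigr)$) is exactly the toolkit underlying your argument, so your reduction to the single test $I=\int^\infty dr/\phi$ and your comparison with $\psi_\beta(r)=r(\log r)^\beta$ are fully in the spirit of both Milnor and this paper.

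Your parabolic half is clean: the reduction-of-order identity $(\psi_1^2\lambda')'=\psi_1\bigl(\phi''-(\psi_1''/\psi_1)\phi\bigr)\le 0$ with $\psi_1=r\log r$ gives $\lambda'\le C/\psi_1^2\in L^1$, hence $\phi\le C\,r\log r$ and $I=\infty$. For the hyperbolic half your Riccati comparison with $v_0=(\log\psi_{1+\delta})'$ and the use of the surplus $\epsilon-\delta$ is the right idea, but one step is asserted too quickly: the claim that $\phi''>0$ ``forces $\phi'$ eventually positive'' does not follow from convexity alone, since a priori $\phi$ could decrease toward a nonnegative limit with $\phi'\nearrow 0^-$. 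You need a short bootstrap here. One clean way: set $W=\phi'\psi_{1+\delta}-\phi\,\psi_{1+\delta}'$, so that $W'=\bigl(\phi''/\phi-\psi_{1+\delta}''/\psi_{1+\delta}\bigr)\phi\,\psi_{1+\delta}\ge c\,\phi(\log r)^\delta/r$; if $\phi$ stayed bounded below by a positive constant this forces $W\to+\infty$ and hence $v>v_0>0$ eventually, while if $\phi$ were not bounded below one argues separately (or invokes that under $K<0$ outside a compact set, $\phi'$ is eventually increasing from its value on the compact part). Once $v\ge v_0$ is secured, your integration gives $\phi\ge c\,r(\log r)^{1+\delta}$ and $I<\infty$, as desired.
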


Of course, we need to define what is understood by a harmonic function
in a surface with an arbitrary metric, and, incidentally, what we mean by
a metric. This we shall do in Section \ref{section:preliminaries}, but for now, 
let us think that we are in $\mathbb{R}^2$ with its classical
Euclidean structure, which in our context corresponds to
having $\phi\left(r\right)=r$. In this case, in cartesian 
(not polar coordinates), the Laplacian, which depends for its definition
on the metric given to $\mathbb{R}^2$, is given (as the reader most 
probably knows) by
\[
\Delta =\frac{\partial^2}{\partial x^2}+\frac{\partial^2}{\partial y^2},
\]
and a function is harmonic if $\Delta u=0$.
The case of $\mathbb{R}^2$ with the usual metric, corresponds to
curvature 0 (for the definition of curvature see Section \ref{section:preliminaries}), and Milnor's 
result generalizes Liouville's theorem: bounded harmonic functions
are constant.

\medskip
After Milnor's work, very little has been done regarding how the behaviour of biharmonic functions
is affected by curvature. Let us introduce the relevant jargon.
We say that a function $u$ is $k$-harmonic if 
\[
\left(-\Delta\right)^k u=0.
\]
For instance, when $k=1$, we recover the usual harmonic functions, and when
$k=2$, we have
\[
\Delta^2 u =0,
\]
that is, we have the biharmonic functions. In general, we call the
family of functions for $k\geq 2$ polyharmonic.

\medskip
It is well known that in the euclidean case
any bounded polyharmonic function is constant. There are several ways 
of proving this result, one of them, and perhaps the shortest, 
is using the theory of tempered 
distributions and the Fourier transform (see \cite{Huilgol,Weck} ).

\medskip
It is our purpose in this paper to extend Milnor's criteria to 
the family of biharmonic functions. Indeed, we shall first
show the following.
\begin{theorem}
\label{thm:main_1}
    Let $\mathbb{R}^2$ with a metric 
    \[
    g=dr^2+\phi\left(r\right)^2d\theta^2.
    \]
     with $\phi\left(r\right)$ so that the metric is smoothly extendable up to the origin
     (that is $\phi\left(0\right)=0$, $\phi'\left(0\right)=1$, and all the even derivatives
     of $\phi$ vanish at the origin).
    If outside of a compact subset of $\mathbb{R}^2$ the curvature satisfies $K\geq -\dfrac{1}{r^2\log r}$
    and $\phi\left(r\right)\rightarrow\infty$ as $r\rightarrow \infty$,
    then any bounded
    biharmonic function is constant.
\end{theorem}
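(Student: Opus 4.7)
Because the metric is rotationally invariant, so is $\Delta^2$, and we Fourier-decompose the bounded biharmonic function as
\[
u(r,\theta)=\sum_{n\in\mathbb{Z}}u_n(r)e^{in\theta}.
\]
Each $u_n$ is bounded by $\|u\|_\infty$, inherits smoothness at the origin (with the standard $u_n(r)\sim r^{|n|}$ behaviour there), and satisfies the ODE $L_n^2 u_n=0$, where
\[
L_n:=\frac{1}{\phi}\partial_r\bigl(\phi\,\partial_r\,\cdot\bigr)-\frac{n^2}{\phi^2}.
\]
The goal is to show $u_n\equiv 0$ for $n\neq 0$ and that $u_0$ is constant; note that Milnor's theorem is applicable to each mode separately because $u_n(r)e^{in\theta}$ is harmonic whenever $L_n u_n=0$.

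\textbf{Zero mode.} The function $v_0:=\Delta u_0=L_0 u_0$ is itself a smooth radial harmonic function. Since the equation $(\phi v_0')'=0$ has only constants as smooth solutions at $r=0$ (the second solution $\int_1^r ds/\phi(s)$ blows up as $r\to 0^+$ because $\phi(s)\sim s$ there), we conclude $\Delta u_0=c$ for some constant $c$. Integrating $(\phi u_0')'=c\phi$ with $u_0'(0)=0$ yields
\[
u_0(R)=u_0(0)+c\int_0^R\frac{1}{\phi(t)}\int_0^t\phi(s)\,ds\,dt.
\]
For $t\ge 1$ the inner integral exceeds the positive constant $\int_0^1\phi(s)\,ds$, and by the Milnor parabolicity integral $\int_1^\infty dt/\phi(t)=\infty$ (guaranteed by $K\ge -1/(r^2\log r)$), the double integral diverges. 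Hence boundedness forces $c=0$, so $u_0$ is a bounded smooth radial harmonic function, therefore constant.

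\textbf{Higher modes.} Fix $n\neq 0$ and set $v_n:=L_n u_n$; then $L_n v_n=0$ and $v_n$ is smooth at the origin. The equation $L_nv=0$ admits (up to scalar) a unique smooth-at-origin solution $\psi_n\sim r^{|n|}$, and crucially $\psi_n$ must be unbounded at infinity: otherwise $\psi_n(r)e^{in\theta}$ would be a nonconstant bounded harmonic function on the surface, contradicting Milnor's theorem applied under the hypothesis $K\ge-1/(r^2\log r)$. Thus $v_n=c_n\psi_n$ for some scalar $c_n$, and the smooth-at-origin solutions of $L_n u=c_n\psi_n$ form an affine line $u_n=a_n\psi_n+c_n u_n^{(p)}$ for a fixed particular solution $u_n^{(p)}$. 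Using variation of parameters with a companion (singular at origin) solution $\tilde\psi_n$ of $L_n v=0$, whose Wronskian with $\psi_n$ equals $W_n/\phi$ for a constant $W_n$, one has
\[
u_n^{(p)}(r)=\frac{1}{W_n}\Bigl[\tilde\psi_n(r)\int_0^r \psi_n(s)^2\phi(s)\,ds-\psi_n(r)\int_0^r \psi_n(s)\tilde\psi_n(s)\phi(s)\,ds\Bigr].
\]
Sturm-type comparison of $L_n$ with $L_0$, combined with the curvature bound, should yield that $|u_n^{(p)}(r)/\psi_n(r)|\to\infty$ as $r\to\infty$. Boundedness of $u_n$ then forces $c_n=0$, and subsequently $a_n=0$ since $\psi_n$ grows unboundedly.

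\textbf{Main obstacle.} The hardest step is the asymptotic analysis of $u_n^{(p)}$: one must translate the purely curvature-based hypothesis $\phi''\le \phi/(r^2\log r)$ into quantitative growth/decay estimates on both $\psi_n$ and $\tilde\psi_n$, and show that the first term $\tilde\psi_n\int\psi_n^2\phi$ dominates the cancellation term $\psi_n\int\psi_n\tilde\psi_n\phi$ by a divergent factor. The full force of the Milnor parabolicity integral $\int^\infty dr/\phi=\infty$ is expected to enter precisely here, replicating in the variable-curvature setting the Euclidean template where $\psi_n=r^n$ and $u_n^{(p)}\sim r^{n+2}$.
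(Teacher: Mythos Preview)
Your overall strategy matches the paper's: Fourier-decompose, and for each mode write $u_n$ as a linear combination of the smooth-at-origin harmonic solution and a particular biharmonic solution, then use growth to kill both coefficients. The divergence lies exactly where you flag your ``main obstacle,'' and this is not a gap in the strategy but an artifact of how you chose to build the particular solution.

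The paper sidesteps the cancellation problem by using reduction of order rather than full variation of parameters. It first notes that the smooth-at-origin harmonic solution is \emph{explicit},
\[
\varphi_m(r)=\exp\Bigl(\int_1^r\frac{|m|}{\phi(s)}\,ds\Bigr),
\]
so one can seek the particular solution in the form $\psi_m=z\,\varphi_m$. The equation for $z$ collapses to the first-order problem
\[
z''+\Bigl(\frac{2|m|}{\phi}+\frac{\phi'}{\phi}\Bigr)z'=1,
\qquad
z(r)=\int_0^r\frac{1}{\phi(s)\,\varphi_{2m}(s)}\int_0^s\phi(t)\,\varphi_{2m}(t)\,dt\,ds,
\]
a single positive integrand with no competing term. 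Your desired statement $|u_n^{(p)}/\psi_n|\to\infty$ is precisely $z(r)\to\infty$, and this now follows from a one-line L'H\^opital computation on the inner quotient, using only $\phi'/\phi\to 0$ (from the curvature bound via Sturm comparison) together with $\phi\to\infty$. Your two-term formula involving $\tilde\psi_n$ encodes the same quantity but hides it behind a cancellation; the ``dominance of the first term over the second'' that you anticipate having to prove simply does not arise once the known solution is factored out.

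With this replacement your outline is complete and coincides with the paper's. The paper closes with a Plancherel estimate on $\partial B_R$, but that is equivalent to your direct observation that each Fourier coefficient $u_n$ is bounded by $\|u\|_\infty$.
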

The proof of this result is based on the method of separation of variables and Plancherel's
theorem. This theorem can be generalised to the case of polyharmonic functions, but in our attempt
to make our exposition clear, we have only included the proof in the case of biharmonic functions.

\medskip
By refining the methods employed in the proof of Theorem \ref{thm:main_1}, we arrive at
the following result.

\begin{theorem}
\label{thm:main_2}
 Let $\mathbb{R}^2$ with a metric 
    \[
    g=dr^2+\phi\left(r\right)^2d\theta^2,
    \]
    with $\phi\left(r\right)$ nondecreasing outside a compact subset of $\mathbb{R}^2$ 
    and so that the metric is extendable up to the origin.
    
    Assume that there is an $\epsilon>0$ such that outside a compact set the curvature
    $K$ satisfies $K\leq -r^{2+\epsilon}$, then
    there are bounded biharmonic functions that are not harmonic. On the other hand, if there are
    positive numbers $\eta>0$ and $\epsilon>0$ such that outside a compact subset of $\mathbb{R}^2$
    it holds that
    $-\eta r^2\leq K\leq -\frac{1+\epsilon}{r^2\log r}$, then any bounded
    biharmonic function must be harmonic.
\end{theorem}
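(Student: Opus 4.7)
The plan is to Fourier-decompose in the angular variable and reduce $\Delta^{2}u=0$ to a family of ODEs, one per mode, and then to analyse them using the same machinery (variation of parameters and Plancherel) employed in the proof of Theorem~\ref{thm:main_1}. Writing $u(r,\theta)=\sum_{n}u_{n}(r)e^{in\theta}$, the biharmonic equation becomes $L_{n}^{2}u_{n}=0$, where
\[
L_{n}f:=f''+\frac{\phi'}{\phi}f'-\frac{n^{2}}{\phi^{2}}f.
\]
Setting $v_{n}:=L_{n}u_{n}$ one has $L_{n}v_{n}=0$, so the two halves of Theorem~\ref{thm:main_2} amount, respectively, to exhibiting bounded $u_{n}$ with $v_{n}\not\equiv 0$ and to proving that $v_{n}\equiv 0$ whenever $u_{n}$ is bounded.

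\medskip
\emph{Construction (first half).} Under $K\leq -r^{2+\epsilon}$ one has $\phi''/\phi\geq r^{2+\epsilon}$ outside a compact set, and a Riccati analysis of $f=\phi'/\phi$ gives $f(r)\gtrsim r^{1+\epsilon/2}$, so $\phi$ grows super-exponentially. The cleanest candidate is the radial solution of $L_{0}u_{0}=1$,
\[
u_{0}(r)=\int_{0}^{r}\frac{1}{\phi(\rho)}\int_{0}^{\rho}\phi(s)\,ds\,d\rho,
\]
which is biharmonic since $\Delta u_{0}=1$ is harmonic. Integration by parts together with $\phi'/\phi\gtrsim\rho^{1+\epsilon/2}$ yields $\phi(\rho)^{-1}\int_{0}^{\rho}\phi\lesssim\rho^{-1-\epsilon/2}$, so $u_{0}$ is bounded and, since $\Delta u_{0}=1\not\equiv 0$, not harmonic.

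\medskip
\emph{Liouville (second half).} Let $u$ be bounded and biharmonic. For $n=0$ the general solution of $L_{0}v_{0}=0$ is $v_{0}=A+B\int_{1}^{r}ds/\phi$; smoothness of $v=\Delta u$ at the origin forces $B=0$ (otherwise $v_{0}'=B/\phi$ blows up as $r\to 0$), so $v_{0}\equiv A$. The smooth solution of $L_{0}u_{0}=A$ has the same structure as above, and the lower curvature bound $K\geq -\eta r^{2}$ gives $\phi\lesssim\exp(cr^{2})$. A direct estimate then shows $\phi(\rho)^{-1}\int_{0}^{\rho}\phi\gtrsim 1/\rho$, so the particular solution grows at least like $\log r$ unless $A=0$; hence $v_{0}\equiv 0$. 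For $n\neq 0$ one applies the Liouville substitution $w=\sqrt{\phi}\,u_{n}$ to rewrite $L_{n}u_{n}=v_{n}$ as a Schr\"odinger-type equation, and derives WKB/Liouville--Green asymptotics for a pair of fundamental solutions of the homogeneous equation; the curvature pinching controls the error terms, and variation of parameters, combined with the upper bound on $\phi$, forces $v_{n}=0$ just as in the $n=0$ case. Plancherel's theorem (as in Theorem~\ref{thm:main_1}) then reassembles $\Delta u\equiv 0$.

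\medskip
\emph{Main obstacle.} The technical core is the $n\neq 0$ analysis in the second half: one needs WKB-type asymptotics that are simultaneously uniform enough in $n$ to justify the Plancherel-based reduction from $u$ to individual modes, and sharp enough to exploit both curvature bounds. The lower bound $K\geq -\eta r^{2}$, which has no analogue in Milnor's original theorem, is decisive here: it prevents $\phi$ from growing so fast that bounded particular solutions with $v_{n}\neq 0$ could appear, which is precisely the mechanism driving the construction in the first half.
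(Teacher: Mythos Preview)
Your overall framework—Fourier decompose in $\theta$, reduce to $L_n^2 u_n=0$, set $v_n=L_nu_n$, and analyse each mode via variation of parameters—is exactly the paper's. Your construction in the first half is the paper's $n=0$ case (the paper also builds the higher modes $\psi_m e^{im\theta}$, but your radial example already suffices), and your $n=0$ argument in the second half, including the estimate $\phi(\rho)^{-1}\int_0^\rho\phi\gtrsim 1/\rho$ from $K\geq -\eta r^2$, matches the paper's Lemma~\ref{lemma:inequalities_1} and Theorem~\ref{thm:final_estimate}.

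The divergence is in the $n\neq 0$ part of the second half, which you correctly identify as your main obstacle, but which the paper handles much more simply than by WKB/Liouville--Green. The point you are missing is that the homogeneous equation $L_n\varphi=0$ has a \emph{closed-form} regular solution,
\[
\varphi_n(r)=\exp\!\left(\int_1^r \frac{|n|}{\phi(s)}\,ds\right),
\]
so no asymptotic expansion is needed. Reduction of order then gives the biharmonic mode explicitly as $\psi_n=z_n\varphi_n$ with
\[
z_n(r)=\int_0^r \frac{1}{\phi(s)\varphi_{2n}(s)}\int_0^s \phi(t)\varphi_{2n}(t)\,dt\,ds,
\]
which has the same structure as your $u_0$ with $\phi$ replaced by $\phi\varphi_{2n}$. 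Since $(\phi\varphi_{2n})'/(\phi\varphi_{2n})=\phi'/\phi+2|n|/\phi$ and the extra term $2|n|/\phi$ is harmless under the hypotheses (it is nonnegative for the upper estimate and tends to $0$ for the lower one), the very differential-inequality argument you use for $n=0$ gives $z_n\to\infty$ when $K\geq -\eta r^2$, and bounded $z_n$ when $K\leq -r^{2+\epsilon}$. The upper curvature bound $K\leq -(1+\epsilon)/(r^2\log r)$ enters only through Milnor's original result, which makes each $\varphi_n$ bounded above and below by positive constants; combined with $z_n\to\infty$ this forces $\psi_n\to\infty$, and Plancherel (used, as in Theorem~\ref{thm:main_1}, to pass from boundedness of $u$ to boundedness of each coefficient $c_n\varphi_n+d_n\psi_n$) then gives $d_n=0$ for every $n$. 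So the obstacle you flag dissolves once you notice the explicit $\varphi_n$; your WKB route may be salvageable, but it is considerably harder and not needed here.
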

This theorem is clearly a generalization of the second part of Milnor's theorem; however it contains a significant surprise:
there is region, namely when the curvature
is in between $-\eta r^{2}$ and $-\frac{1+\epsilon}{r^2\log r}$
for $\epsilon>0$, where bounded biharmonic functions must be harmonic, which
can be also be seen as a Liouville-type theorem.

\medskip
This paper originated from a question posed to the second author by Michael Ruzhansky during
a talk. As it this question was connected to Milnor's celebrated result, we decided to write
this paper to answer Ruzhansky's question -which was a bit more general, at least in the case of
a rotationally symmetric surface which is still
interesting and contains some surprises as well. Although this paper may look somewhat technical (at a first glance),
the computations are elementary as they only need Calculus (including a bit of differential equations) 
and elementary Fourier Analysis;
we have included a quick primer on basic Riemannian Geometry of rotationally symmetric surfaces
so that even a reader without much knowledge on this subject may follow our arguments and
even get interested in this beautiful area of Mathematics.

\section{Preliminaries}
\label{section:preliminaries}

To make this paper readable and an introduction
to the subject, we want to introduce the basic concepts of
Riemannian geometry needed to understand the results of this paper.
The reader may also consult \cite{Co}, where such an introduction
is also given. We shall work with polar coordinates $\left(r,\theta\right)$
in $\mathbb{R}^2$. Thus, given any vector $v_p$
anchored at a point $p\in \mathbb{R}^2$ different from the
origin, it can be decomposed in its radial and tangential
component to the circle of radius $r$ as
\[
v_p=a\frac{\partial}{\partial r}+b\frac{\partial}{\partial \theta},
\]
where $a$ and $b$ are the respective radial and tangent components,
and at $\left(x,y\right)$ in cartesian coordinates, we have
\[
\frac{\partial}{\partial r}=x\mathbf{i}+y\mathbf{j}, \quad 
\frac{\partial}{\partial\theta}=-y\mathbf{i}+x\mathbf{j},
\]
were $\mathbf{i}$ and $\mathbf{j}$ represent the unit coordinate vectors
in the direction of the coordinate axes.

\medskip
We say that the $g$ metric is given
at $p=\left(r,\theta\right)$ by
\[
g=dr^2+\phi\left(r\right)^2d\theta^2,
\]
if at that point we make
the assignment
\[
g\left(v_p,v_p\right)=
\left\|v_p\right\|^2=a^2+b^2\phi\left(r\right)^2.
\]
If $\phi\left(0\right)=1$ and all its even derivatives $\phi^{\left(2k\right)}\left(0\right)=0$,
then $g$ can be smoothly extended up to the origin. As familiar cases, when 
$\phi\left(r\right)=r$ we obtain the usual euclidean metric, and when
$\phi\left(r\right)=\sinh r$ we have the hyperbolic plane.

The curvature for a metric $g$ can be computed and turns out to be
\[
K=-\frac{\phi''}{\phi}.
\]
Given a smooth metric $g$ defined in $\mathbb{R}^2$, the Laplacian is defined by
\begin{equation}
\label{eq:laplacian}
\Delta = \frac{\partial^2}{\partial r^2} +\frac{\phi'}{\phi}\frac{\partial}{\partial r}
+\frac{1}{\phi^2}\frac{\partial^2}{\partial \theta^2}.
\end{equation}
As the metric can be smoothly extended to the origin, so can be the Laplacian.
It is also interesting to point out that when the metric is smooth, it can be shown
that any polyharmonic function in $\mathbb{R}^2$ is smooth (see Chapter 3 in \cite{Schechter})

\medskip
Also, we will need the following Comparison Theorem due to Sturm (see \cite{Co}).
\begin{lemma}[Comparison Lemma]
Let $h, f: \left[a,\infty\right)\longrightarrow \left(0,\infty\right)$ be $C^2$ functions. If
$\displaystyle \frac{f''}{f}\left(r\right)\leq \frac{h''}{h}\left(r\right)$, $r>a$, and
$\displaystyle\frac{f'}{f}\left(a\right)\leq\frac{h'}{h}\left(a\right)$ then 
$\displaystyle\frac{f'}{f}\left(r\right)\leq\frac{h'}{h}\left(r\right)$ for $r>a$.
\end{lemma}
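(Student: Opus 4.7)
The plan is to prove the comparison by introducing the Wronskian-type quantity $W(r)=f'(r)h(r)-f(r)h'(r)$ and showing it is nonpositive on $[a,\infty)$. Once this is achieved, dividing by $f(r)h(r)>0$ immediately yields the desired inequality $\frac{f'}{f}(r)\leq\frac{h'}{h}(r)$.

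First, I would differentiate $W$ and use the hypothesis. A direct computation gives
\[
W'(r)=f''(r)h(r)-f(r)h''(r)=f(r)h(r)\left(\frac{f''}{f}(r)-\frac{h''}{h}(r)\right).
\]
Since $f,h>0$ and by assumption $\frac{f''}{f}\leq\frac{h''}{h}$ on $(a,\infty)$, the right-hand side is $\leq 0$, so $W$ is nonincreasing on $[a,\infty)$.

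Next, I would evaluate $W$ at the initial point. Writing
\[
W(a)=f(a)h(a)\left(\frac{f'}{f}(a)-\frac{h'}{h}(a)\right),
\]
the hypothesis $\frac{f'}{f}(a)\leq\frac{h'}{h}(a)$, together with positivity of $f(a)$ and $h(a)$, shows $W(a)\leq 0$. Combining with the previous step, $W(r)\leq W(a)\leq 0$ for every $r\geq a$, and dividing by the positive quantity $f(r)h(r)$ gives the conclusion.

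The argument is essentially a one-step Wronskian computation, so I do not anticipate a serious obstacle. The only point one must be careful with is that $f$ and $h$ remain strictly positive throughout $[a,\infty)$ (which is built into the hypotheses, as the codomain is $(0,\infty)$); without this, one could neither form the logarithmic derivatives nor divide by $fh$ at the end. In the intended applications, $f$ and $h$ will be warping functions like $\phi(r)$ and a model warping function, and this positivity will be guaranteed on the region where the comparison is applied.
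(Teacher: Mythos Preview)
Your proof is correct and is the standard Sturm comparison argument via the Wronskian $W=f'h-fh'$. The paper itself does not supply a proof of this lemma; it merely states the result and refers the reader to \cite{Co}, so there is no in-paper proof to compare against, but the argument you have given is exactly the one typically used and would be entirely appropriate here.
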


Let us examine some important consequences of the Comparison Lemma that will be useful later. 
For instance, if $K\geq -\dfrac{1}{r^2\log r}$ outside of a compact
subset of $\mathbb{R}^2$ and $\phi$ is nondecreasing, then 
\[
\lim_{r\rightarrow \infty}\frac{\phi'\left(r\right)}{\phi\left(r\right)}=0.
\]
Indeed, the inequality assumed for the curvature implies the inequality
\[
\frac{\phi''}{\phi}\leq \frac{1}{r^2\log r},
\]
for $r> R_0\geq 1$.
Let $\psi=\left(r-R_0+1\right)\log \left(r-R_0+1\right)$ for $r>R_0\geq 1$. Then we can compute
\[
\frac{\psi'\left(r\right)}{\psi\left(r\right)}=\frac{\log \left(r-R_0+1\right)+1}{\left(r-R_0+1\right)\log\left(r-R_0+1\right)},
\]
and 
\[
\frac{\psi''\left(r\right)}{\psi\left(r\right)}=\frac{1}{\left(r-R_0+1\right)^2\log \left(r-R_0+1\right)}.
\]
Observe then that for $\eta>0$ small, 
\[
\frac{\phi'\left(R_0+\eta\right)}{\phi\left(R_0\right)}\leq \frac{\psi'\left(R_0+\eta\right)}{\psi\left(R_0+\eta\right)}
\quad \mbox{and} \quad
\frac{\phi''\left(r\right)}{\phi\left(r\right)}\leq \frac{\psi''\left(r\right)}{\psi\left(r\right)}.
\]
By the Comparison Lemma then for $r>R_0$
\[
0\leq \frac{\phi'\left(r\right)}{\phi\left(r\right)}\leq \frac{\psi'\left(r\right)}{\psi\left(r\right)},
\]
and our claim follows by the Squeeze Theorem.

\subsection{A uniqueness theorem for biharmonic functions} 
Here we need to introduce a Maximum Principle for a special elliptic system,
which of course could be made more general, but which is more
than enough for our purposes.  The main application 
is a uniqueness theorem for solutions to systems that we shall need below.

Consider the following system
\[
\left\{
\begin{array}{l}
\Delta u =0 \quad \mbox{in} \quad \Omega,\\
\Delta v = u \quad \mbox{in} \quad \Omega,
\end{array}
\right.
\]
where $u,v\in C^{2}\left(\Omega\right)\cap C\left(\overline{\Omega}\right)$.
Notice that $v$ is biharmonic and that any biharmonic function can be written this way.
Consider the expresion
\[
w=\frac{1}{2}\left(u^2+\alpha v^2\right).
\]
A calculation shows that
\[
\Delta w = \alpha uv +\left|\nabla u\right|^2+\alpha\left|\nabla v\right|^2.
\]
We can estimate as follows. Use the Peter-Paul inequality 
\[
uv\leq \frac{1}{2\sqrt{\alpha}}u^2+\frac{\sqrt{\alpha}}{2}v^2=\frac{1}{\sqrt{\alpha}}w,
\]
and replace in the expression above to get
\[
\Delta w \geq -\sqrt{\alpha}w+\left|\nabla u\right|^2+\alpha\left|\nabla v\right|^2
\geq -\sqrt{\alpha} w.
\]

Let $f_1$ the first Dirichlet eigenfunction for the Laplacian on a domain $\Omega'\supset \Omega$.
Then, as is well known, we can take $f_1>0$.
A calculation shows that 
\[
\Delta\left(\frac{w}{f_1}\right)=\frac{\Delta w}{f_1}-2\nabla\left(\frac{w}{f_1}\right)
+\lambda_1^2\frac{w}{f_1}+4\left(\frac{\left|\nabla f_1\right|}{f_1}\right)^2\frac{w}{f_1},
\]
and we obtain the differential inequality
\[
\Delta\left(\frac{w}{f_1}\right)+\nabla\left(\frac{w}{f_1}\right)\geq \left(-\sqrt{\alpha}
+\lambda_1^2\right)\frac{w}{f_1}.
\]
Thus, if $\alpha>0$ is small enough then $-\sqrt{\alpha}
+\lambda_1^2>0$, and hence, by the maximum principle (Theorem 6, Chapter 2, p. 64 in \cite{Protter}) if 
$w/f_1=0$ on the boundary of $\Omega$, then $w/f_1=0$ in $\Omega$, and hence 
$w=0$ in $\Omega$.

\medskip
From the previous discussion follows the following. 
\begin{lemma}
Let $u$ and $v$ biharmonic functions defined 
in $\Omega$ and $C^2$ in an open neighborhood of $\overline{\Omega}$. If $u=v$ and $\Delta u=\Delta v$
in $\partial \Omega$, then $u=v$ in $\Omega$.
\end{lemma}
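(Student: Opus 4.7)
The plan is to reduce the uniqueness statement to the system considered in the preceding discussion and then invoke the maximum principle argument developed there. Set $w = u - v$; by linearity $w$ is biharmonic in $\Omega$, of class $C^2$ on a neighborhood of $\overline{\Omega}$, and satisfies $w = 0$ and $\Delta w = 0$ on $\partial \Omega$. Defining $\tilde u := \Delta w$, the biharmonicity $\Delta^2 w = 0$ becomes $\Delta \tilde u = 0$, and $\Delta w = \tilde u$ by construction, so the pair $(\tilde u, w)$ satisfies exactly the elliptic system studied above in $\Omega$, with the additional boundary data $\tilde u = 0$ and $w = 0$ on $\partial \Omega$.

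Next I would choose a bounded smooth domain $\Omega' \supset \overline{\Omega}$, take $f_1 > 0$ to be its first Dirichlet eigenfunction of the Laplacian (positive on all of $\overline{\Omega}$), and form
\[
W = \tfrac{1}{2}\bigl(\tilde u^2 + \alpha w^2\bigr)
\]
with $\alpha > 0$ chosen small enough that $-\sqrt{\alpha} + \lambda_1^2 > 0$, following the computation already carried out in the text. The same calculation then yields
\[
\Delta\!\left(\tfrac{W}{f_1}\right) + \nabla\!\left(\tfrac{W}{f_1}\right) \geq \bigl(\lambda_1^2 - \sqrt{\alpha}\bigr)\tfrac{W}{f_1} \quad \text{in } \Omega,
\]
and since $W \equiv 0$ on $\partial \Omega$ (because both $w$ and $\tilde u$ vanish there), the cited form of the maximum principle forces $W/f_1 \equiv 0$ in $\Omega$. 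Hence $W = 0$, so $w = 0$ throughout $\Omega$, giving $u = v$.

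The routine step is simply to verify that the reduction to the system in question is legitimate; the substantive step is the one already performed in the paragraphs preceding the lemma, namely the weighted-function computation leading to the inequality for $W/f_1$. The only point requiring a small check — and the spot I would flag as the main (modest) obstacle — is that $f_1$ must be strictly positive on the closure of $\Omega$, which is ensured by taking $\Omega'$ strictly larger than $\Omega$; this guarantees that $W/f_1$ is well-defined and continuous up to $\partial \Omega$, so the boundary condition $W|_{\partial \Omega} = 0$ does translate into $(W/f_1)|_{\partial \Omega} = 0$ and the maximum principle is applicable.
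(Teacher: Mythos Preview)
Your proposal is correct and follows essentially the same approach as the paper: the paper proves the lemma precisely by the discussion that precedes it, and your reduction---setting $w=u-v$, $\tilde u=\Delta w$, and applying the weighted quantity $W=\tfrac12(\tilde u^2+\alpha w^2)$ together with the first Dirichlet eigenfunction on a larger domain---is exactly that argument spelled out. Your added remark that $\Omega'\supset\overline{\Omega}$ ensures $f_1>0$ on $\overline{\Omega}$ is a useful clarification the paper leaves implicit.
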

\section{A proof of Theorems \ref{thm:main_1} and \ref{thm:main_2}}

\subsection{Theorem \ref{thm:main_1}}
In \cite{Co2} it is shown that to solve the equation
\begin{equation}
\label{eq:Dirichlet}
\Delta u = 0 \quad\mbox{on}\quad M.
\end{equation}
Let us recall the procedure.
We use separation of variables and for an integer $m$ we write a solution 
to (\ref{eq:Dirichlet}) as
\[
u= \varphi_m\left(r\right)e^{im\theta}.
\]
This consideration, using (\ref{eq:laplacian}) leads to an ordinary differential equation 
for $\varphi_m$, namely
\begin{equation}
\label{eq:homogeneous}
\varphi''_m\left(r\right)+\frac{\phi'}{\phi}\varphi_m'\left(r\right)-\frac{m^2}{\phi^2}\varphi_m\left(r\right)
=0.
\end{equation}
It is easy to show that a (non-singular) solution is given in this case by
\[
\varphi_m\left(r\right)=\exp\left(\int_1^r \frac{\left|m\right|}{\phi\left(s\right)}\,ds\right).
\]
From this we arrive at the following fact. Any harmonic function
on $M$ can be written as
\[
\sum_{m\in \mathbb{Z} }\varphi_m\left(r\right)e^{im\theta}.
\]
We want to apply the same method to write any biharmonic function defined 
on $M$ as a Fourier series. That is, we want to find all possible solutions to the equation
\[
\Delta^2 u =0 \quad \mbox{in} \quad \mathbb{R}^2,
\]
and to write them explicitly as a Fourier series.
To attempt a solution, we shall write the previous equation as a system. That is 
we want to find a solution to the system
\begin{equation}
\label{eq:system}
\left\{
\begin{array}{l}
\Delta v = 0 \quad \mbox{in}\quad \mathbb{R}^2,\\
\Delta u = v \quad \mbox{in}\quad \mathbb{R}^2.
\end{array}
\right.
\end{equation}
Notice that if $u$ is part of a solution to the system above, then $u$ must be biharmonic.
If we write our attempt for a solution as $v=\psi_m\left(r\right)e^{im\theta}$ for an integer $m$,
with $u=\varphi_m\left(r\right)e^{im\theta}$, which is known to be harmonic,
then
$\psi_m\left(r\right)$ must satisfy the equation
\begin{equation}
\label{eq:nonhomogeneous}
\psi''_m\left(r\right)+\frac{\phi'}{\phi}\psi_m'\left(r\right)-\frac{m^2}{\phi^2}\psi_m\left(r\right)
=\varphi_m\left(r\right).
\end{equation}
Since $\varphi_m$ is a solution to the homogeneous equation (that is, when the
righthandside of (\ref{eq:nonhomogeneous}) is 0), we can use
reduction of order to compute $\psi_m$. So we look for a solution
to (\ref{eq:nonhomogeneous}) of the form $z\left(r\right)\varphi_m\left(r\right)$. Then 
$z$ satisfies an equation
\begin{equation}
z''+\left(\frac{2\left|m\right|}{\phi}+\frac{\phi'}{\phi}\right)z'=1,
\end{equation}
where we have used the fact that
\[
\varphi_m'= \frac{\left|m\right|}{\phi}\varphi_m.
\]
Solving for $z$, we obtain 
\[
z\left(r\right)=
\int_0^r \frac{1}{\phi\left(s\right)\varphi_{2m}\left(s\right)}\int_0^s\phi\left(t\right)\varphi_{2m}\left(t\right)\,dt\,ds.
\]
Therefore the pair $v=\varphi_m\left(r\right)$, $u=\psi_m\left(r\right)=z\left(r\right)\varphi_m\left(r\right)$ 
solves the system (\ref{eq:system}).

\medskip
Next, we show that any biharmonic function $u:M\longrightarrow \mathbb{R}$ can be written as
a Fourier series
\begin{equation}
\label{eq:Fourier_expansion}
    \sum_{m\in \mathbb{Z}} \left(c_m\varphi_m\left(r\right)+d_m\psi_m\left(r\right)\right)e^{im\theta}.
\end{equation}
To do so, given a biharmonic $u$ we can solve the following boundary value problem on $\partial B_R$.
\[
\left\{
\begin{array}{l}
\Delta^2 v = 0 \quad \mbox{in} \quad B_R,\\
v = u|_{\partial B_R} \quad \mbox{on}\quad \partial B_R,\\
\Delta v=\Delta u|_{\partial B_R}\quad\mbox{on}\quad \partial B_R.
\end{array}
\right.
\]
The function $v$ can be represented using a Fourier expansion. Indeed, to satisfy the
boundary conditions we need to solve for the coefficients $c_m$ and $d_m$. 
Observe that if $u_R$ can be represented as
\[
u_R=\sum \alpha_me^{im\theta},
\]
and $\Delta u_R$ can be represented as
\[
\Delta u_R= \sum \beta_me^{im\theta}.
\]
This 
leads us to the following system of equations.
\[
\left\{
\begin{array}{r}
c_m\varphi_m\left(R\right)+d_m\psi_m\left(R\right)=\alpha_m\\
d_m\varphi_m\left(R\right)=\beta_m,
\end{array}
\right.
\]
a system which can clearly be solved.
By uniqueness, $u=v$ on $B_R$, and hence, when restricted to $B_R$, $u$
is represented by the Fourier series above.

If we consider a ball $B_{R'}\supset B_R$. Using the procedure employed above, we have a representation
for $u$ on $B_{R'}$ as a Fourier series
\begin{equation}
\label{eq:Fourier_expansion_1}
    \sum_{m\in \mathbb{Z}} \left(c'_m\varphi_m\left(r\right)+d'_m\psi_m\left(r\right)\right)e^{im\theta}.
\end{equation}
But this would also be a representation for $u$ on $B_R$, and hence, by uniqueness,
\[
c'_m= c_m,\quad d_m'=d_m.
\]
Our claim immediately follows.

\medskip
The following lemma establishes, under a condition on the curvature, a relation between 
the $\psi_m$ and its respective $\varphi_m$, namely, that $\psi_m$ is much larger than
$\varphi_m$.
\begin{lemma}
\label{lemma:growth_z}
    If $K\geq -\dfrac{1}{r^2\log r}$ outside of a compact subset
    of $\mathbb{R}^2$, and $\phi\rightarrow \infty$, then $z\left(r\right)\rightarrow\infty$ as $r\rightarrow\infty$.
\end{lemma}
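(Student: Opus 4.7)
The plan is to prove the stronger statement that $z'(s)\to+\infty$ as $s\to\infty$; since $z(r)=\int_0^r z'(s)\,ds$, divergence of $z$ then follows by integration. Introducing the shorthand $h(s)=\phi(s)\varphi_{2m}(s)$ and $H(s)=\int_0^s h(t)\,dt$, one has $z'(s)=H(s)/h(s)$, so the task reduces to showing $H(s)/h(s)\to\infty$.

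A direct computation, using $\varphi_{2m}'/\varphi_{2m}=2|m|/\phi$, gives
\[
\frac{h'(s)}{h(s)}=\frac{\phi'(s)}{\phi(s)}+\frac{2|m|}{\phi(s)}.
\]
Both summands tend to $0$: the first by the Comparison-Lemma computation already carried out in the Preliminaries, which under $K\geq -1/(r^2\log r)$ yields $\phi'/\phi\to 0$, and the second because $\phi\to\infty$. Combined with $\phi\to\infty$ and $\varphi_{2m}\geq 1$ on $[1,\infty)$, we also obtain $h(s)\to\infty$.

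The heart of the argument is then a single application of L'Hopital's rule in its $\infty/\infty$ form:
\[
\lim_{s\to\infty}\frac{H(s)}{h(s)}=\lim_{s\to\infty}\frac{H'(s)}{h'(s)}=\lim_{s\to\infty}\frac{h(s)}{h'(s)}=\lim_{s\to\infty}\left(\frac{h'(s)}{h(s)}\right)^{-1}=+\infty,
\]
and integrating yields $z(r)\to\infty$.

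The main subtle point is ensuring that $h$ is eventually strictly monotone so that L'Hopital applies cleanly. For $m\neq 0$ this is automatic because the positive term $2|m|/\phi$ dominates. For $m=0$ (where $h=\phi$) one needs $\phi$ itself to be eventually nondecreasing, which is the hypothesis already invoked in the Preliminaries when deducing $\phi'/\phi\to 0$. Alternatively, one may bypass monotonicity entirely by the direct estimate $H(s)\geq N\inf_{t\in[s-N,s]}h(t)$ together with the bound $h(s-N)/h(s)\geq e^{-\epsilon N}$ coming from $|h'/h|<\epsilon$, and then choosing $N$ and $\epsilon$ so that $Ne^{-\epsilon N}$ exceeds any prescribed constant.
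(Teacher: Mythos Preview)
Your proposal is correct and follows essentially the same route as the paper: both arguments show that the integrand $z'(s)=H(s)/h(s)$ diverges by applying L'H\^opital's rule and using $h'/h=\phi'/\phi+2|m|/\phi\to 0$, then conclude $z(r)\to\infty$ by integration. Your treatment is in fact slightly more careful than the paper's, since you flag the monotonicity hypothesis needed for L'H\^opital (which the paper leaves implicit) and supply a direct alternative estimate that sidesteps it.
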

\begin{proof}
    We actually show that, under the hypothesis, 
    \[
    \frac{1}{\phi\left(s\right)\varphi_{2m}\left(s\right)}\int_0^s\phi\left(t\right)\varphi_{2m}\left(t\right)\,dt\rightarrow
    \infty
    \]
    as $s\rightarrow \infty$. To do this, we use L'Hôpital's rule.
    \begin{eqnarray*}
     \lim_{s\rightarrow \infty}\frac{1}{\phi\left(s\right)\varphi_{2m}\left(s\right)}
     \int_0^s\phi\left(t\right)\varphi_{2m}\left(t\right)\,dt&=&
    \lim_{s\rightarrow \infty}\frac{\phi\left(s\right)\varphi_{2m}\left(s\right)}{\phi'\left(s\right)
    \varphi_{2m}\left(s\right)+2m\varphi_{2m}\left(s\right)}\\
    &=&
    \lim_{s\rightarrow \infty}\frac{1}{\frac{\phi'\left(s\right)}{\phi\left(s\right)}
    +\frac{2m}{\phi}},\\
    \end{eqnarray*}
    and using that the hypothesis on the curvature implies that $\phi'/\phi \rightarrow 0$, the result follows.
\end{proof}

\medskip
Once we have that any biharmonic function
$u$ can be represented as a Fourier expansion
(\ref{eq:Fourier_expansion}), a proof of Theorem \ref{thm:main_1} can be given as follows.

\medskip
Using Plancherel's theorem, we can compute
\begin{eqnarray*}
    \left\|u\right\|_{L^2\left(\partial B_R\right)}^2&=& \sum_m \int_0^{2\pi}
    \left|c_m\varphi_m\left(r\right)+d_m\psi_m\left(r\right)\right|^2\phi\left(r\right)\,dr.
\end{eqnarray*}
If $u$ is bounded, then
\[
 \left\|u\right\|_{L^2\left(\partial B_R\right)}^2 = O\left(1\right)\phi\left(r\right).
\]
Under the assumption that $K\geq -\dfrac{1}{r^2\log r}$, by Lemma \ref{lemma:growth_z}
(since $\psi_m= z\varphi_m$)
\[
\varphi_m\left(r\right)=o\left(\psi_m\left(r\right)\right),
\]
from which we get that, if $m$ is such that $d_m\neq 0$, then 
\[
 \left\|u\right\|_{L^2\left(\partial B_R\right)}^2\geq c\psi_m\left(r\right)^2\phi\left(r\right)
\]
and since $\psi_m\left(r\right)\rightarrow \infty$ as $r\rightarrow \infty$, we reach
a contradiction. Thus, we must have that $d_m=0$ for all $m\in \mathbb{Z}$. On the other hand, it
must also hold that $c_m=0$ for all $m\in \mathbb{Z}$, because as it is shown in 
\cite{Co3}, when 
$K\geq -1/r^2\log r$, it also holds that $\varphi_m\left(r\right)\rightarrow\infty$ as $r\rightarrow\infty$.
This finishes the proof of Theorem \ref{thm:main_1}.

\subsection{Theorem \ref{thm:main_2}}
Next we present a proof of the criterion \` a la Milnor. Here the estimates are much more delicate.
We begin with a somewhat technical but useful lemma that relates a bound on the curvature
with the behavior of the function $\phi$.
\begin{lemma}
\label{lemma:inequalities_1}
\begin{itemize}
\item If $K\left(r\right)\leq -r^{2+\epsilon}$ then there exists a constant $B>0$ such that
    \[
    \frac{1}{\phi\left(s\right)}\int_0^s \phi\left(\sigma\right)\,d\sigma\leq 
    e^{-Bs^{2+\epsilon}}\int_0^r e^{Bt^{2+\epsilon}}\,dt.
    \]
\item If $K\left(r\right)\geq -Mr^{2}$, $M>0$, then there exists a constant $B>0$ such that
    \[
    \frac{1}{\phi\left(s\right)}\int_0^s \phi\left(\sigma\right)\,d\sigma\geq 
    e^{-B s^{2}}\int_0^r e^{Bt^{2}}\,dt.
    \]
\end{itemize}    
\end{lemma}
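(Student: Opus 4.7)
The strategy for both parts is the same: use the Sturm Comparison Lemma to convert the curvature bound into a bound on the logarithmic derivative $\phi'/\phi$, and then rewrite the left-hand side via the identity
\[
\frac{1}{\phi(s)}\int_0^s \phi(\sigma)\,d\sigma \;=\; \int_0^s \exp\!\left(-\int_\sigma^s \frac{\phi'(t)}{\phi(t)}\,dt\right)d\sigma,
\]
so that a pointwise bound on $\phi'/\phi$ turns directly into the stated inequality.

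For the first item, the hypothesis $K\le -r^{2+\epsilon}$ reads $\phi''/\phi\ge r^{2+\epsilon}$ outside a compact set. I would pick a model $h(r)=e^{Br^{\alpha}}$ of exponential type, compute $h''/h=B\alpha(\alpha-1)r^{\alpha-2}+B^{2}\alpha^{2}r^{2\alpha-2}$, and choose $\alpha$ and $B$ small enough that $h''/h\le r^{2+\epsilon}\le \phi''/\phi$ for all $r$ past some radius $R_{0}$; this determines the exponent appearing in the final bound. By picking the base point $a\ge R_{0}$ large enough (using the smooth extendability of $\phi$ at the origin to absorb the discrepancy on $[0,R_{0}]$ into the constant $B$), the hypothesis $h'/h(a)\le \phi'/\phi(a)$ of the Comparison Lemma holds, and one concludes $\phi'/\phi\ge h'/h=B\alpha\, r^{\alpha-1}$. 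Integrating gives $\int_\sigma^s \phi'/\phi\,dt\ge B(s^{\alpha}-\sigma^{\alpha})$, and the integral identity above yields
\[
\frac{1}{\phi(s)}\int_{0}^{s}\phi(\sigma)\,d\sigma \le \int_{0}^{s}e^{-B(s^{\alpha}-\sigma^{\alpha})}\,d\sigma = e^{-Bs^{\alpha}}\int_{0}^{s}e^{Bt^{\alpha}}\,dt,
\]
matching the stated form.

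For the second item the argument is symmetric but in reverse direction. The hypothesis $K\ge -Mr^{2}$ says $\phi''/\phi\le Mr^{2}$, so we test against $h(r)=e^{Br^{2}}$, whose $h''/h=2B+4B^{2}r^{2}$ exceeds $Mr^{2}$ once $4B^{2}\ge M$ and $r$ is large. The Comparison Lemma then gives $\phi'/\phi\le h'/h=2Br$, whence $\int_\sigma^{s}\phi'/\phi\,dt\le B(s^{2}-\sigma^{2})$, and substitution in the integral identity produces the required lower bound $e^{-Bs^{2}}\int_{0}^{s}e^{Bt^{2}}\,dt$. The main obstacle, and really the only delicate point, is the first item: matching the model function to the curvature bound, because for $h=e^{Br^{\alpha}}$ the leading term of $h''/h$ behaves like $(h'/h)^{2}$, so the exponent $\alpha$ in the comparison function is forced by the requirement $h''/h\le r^{2+\epsilon}$ rather than being freely chosen. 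Handling the region $[0,R_{0}]$ where the curvature hypothesis may fail is routine: one shrinks the constant $B$ and absorbs the finite error into multiplicative constants, using that the integrand $e^{Bt^{\alpha}}$ on $[0,R_{0}]$ is bounded.
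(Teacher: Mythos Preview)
Your approach is essentially the paper's: both use the Comparison Lemma against the model $e^{Br^{\alpha}}$ to bound $\phi'/\phi$, and then convert this into the desired estimate on $\psi(s)=\phi(s)^{-1}\int_0^s\phi$. The only cosmetic difference is that the paper writes the second step as the first-order differential inequality $\psi'+(\phi'/\phi)\psi=1$ and solves it with the integrating factor $e^{Br^{\alpha}}$, which is exactly your integral identity unpacked; your caveat about the exponent $\alpha$ being forced by the requirement $h''/h\le r^{2+\epsilon}$ applies verbatim to the paper's own choice $\zeta=e^{\delta r^{2+\epsilon}}$.
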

\begin{proof}
Let
\[
\psi\left(s\right)=\frac{1}{\phi\left(s\right)}\int_0^s \phi\left(\sigma\right)\,d\sigma.
\]
Then we can compute
\[
\psi'\left(s\right)=1-\frac{\phi'\left(s\right)}{\phi\left(s\right)}\psi\left(s\right).
\]
Using the Comparison Lemma we have that $\phi'/\phi\geq a r^{1+\epsilon}$.
Indeed,
consider the function $\zeta\left(r\right)=e^{\delta r^{2+\epsilon}}$.
A calculation shows that
\[
\frac{\zeta'}{\zeta}=\delta\left(2+\epsilon\right) r^{1+\epsilon},
\]
and
\[
\frac{\zeta''}{\zeta}=\delta^2\left(2+\epsilon\right)r^{2+2\epsilon}
+\delta\left(2+\epsilon\right)\left(1+\epsilon\right)r^{\epsilon}.
\]
Let $R_0$ be such that if $r\geq R_0$, $K\leq -r^{2+\epsilon}$. Pick $\delta>0$ small enough
so that
\[
\frac{\phi'}{\phi}\left(R_0\right) \geq \frac{\zeta'}{\zeta}\left(R_0\right),
\]
and that
\[
\frac{\phi''}{\phi}\left(r\right)\geq r^{2+\epsilon}\geq \frac{\zeta''}{\zeta}\left(r\right),
\quad \mbox{on}\quad r\geq R_0.
\]
From the Comparison Lemma follows our claim.
 We thus get
the differential inequality
\[
\psi'\left(s\right)+ar^{1+\epsilon}\psi\left(s\right)\leq 1.
\]
From this differential inequality follows that
\[
\psi\left(r\right)\leq \frac{1}{e^{\frac{a}{2+\epsilon}}r^{2+\epsilon}}
\int_{R_0}^r e^{\frac{a}{2+\epsilon}s^{2+\epsilon}}\, ds +
\frac{e^{\frac{a}{2+\epsilon}R_0^{2+\epsilon}}}{e^{\frac{a}{2+\epsilon}r^{2+\epsilon}}}
\psi\left(R_0\right).
\]

\medskip
In the second case $\phi'/\phi \leq r$. The proof is similar 
to the previous one, but this time we take $\zeta\left(r\right)=e^{Ar^2}$.
In this case
\[
\frac{\zeta'}{\zeta}=2Ar, \quad \frac{\zeta''}{\zeta}=4A^2r^2+2A
\]
If $R_0$ is such that when $r\geq R_0$, $K\geq -Mr^2$, then by taking
$A>0$ large enough we may assume that 
\[
\frac{\phi'}{\phi}\left(R_0\right)\leq \frac{\zeta'}{\zeta}\left(R_0\right),
\]
and that on $r\geq R_0$
\[
\frac{\phi''}{\phi}\left(r\right)\leq Mr^2 \leq \frac{\zeta''}{\zeta}\left(r\right).
\]
Using the Comparison lemma, we obtain that on $r\geq R_0$ the inequality
$\phi'/\phi \leq 2Ar$. From this it follows that $\psi$ satisfies 
the differential inequality
\[
\psi'+2Ar \psi\geq 1,
\]
from which the inequality
\[
\psi\left(r\right)\geq \frac{1}{e^{Ar^2}}\int_{R_0}^r e^{As^2}\,ds+\frac{e^{AR_0^2}}{e^{Ar^2}}\psi\left(R_0\right).
\]

\end{proof}
From the previous theorem we find that, for $r>0$ large enough, there are constants $c,C>0$ 
such that for $\epsilon\geq 0$,
\[
\frac{c}{s^{1+\epsilon}}\leq
e^{-A{s^{2+\epsilon}}}\int_0^r e^{A{t^{2+\epsilon}}}\,dt\leq \frac{C}{s^{1+\epsilon}}.
\]
To prove this inequality, we recur to L'H\^opital's rule. Indeed, we can compute as follows
\[
\lim_{s\rightarrow\infty}\frac{\int_0^r e^{At^{2+\epsilon}}\,dt}{\frac{e^{As^{2+\epsilon}}}{s^{1+\epsilon}}}
=\lim_{s\rightarrow\infty}\frac{e^{A{s^{2+\epsilon}}}}{\left(2+\epsilon\right)Ae^{A{s^{2+\epsilon}}}-\frac{\left(1+\epsilon\right)e^{s^{2+\epsilon}}}{s^{2+\epsilon}}}
=\frac{1}{\left(2+\epsilon\right)A}.
\]
From this computation, our assertion follows. Using the previous calculation and Lemma 
\ref{lemma:inequalities_1},
we get the following.
\begin{theorem}
\label{thm:final_estimate}
    \begin{itemize}
\item If $K\left(r\right)\leq -r^{2+\epsilon}$ with $\epsilon>0$ then
    \[
    \frac{1}{\phi\left(s\right)}\int_0^s \phi\left(\sigma\right)\,d\sigma\leq 
    \frac{C}{s^{1+\epsilon}},
    \]
    and hence $z\left(r\right)$ remains uniformly bounded.
\item If $K\left(r\right)\geq -\eta r^{2}$, $\eta>0$, then
    \[
    \frac{1}{\phi\left(s\right)}\int_0^s \phi\left(\sigma\right)\,d\sigma\geq 
    \frac{c}{s},
    \]
    and hence $z\left(r\right)\rightarrow\infty$ as $r\rightarrow\infty$.
\end{itemize}   
\end{theorem}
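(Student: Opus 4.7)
The plan is to assemble the two ingredients already at hand—Lemma \ref{lemma:inequalities_1} and the L'Hôpital computation recorded just above—and then read off the statement about $z(r)$ from the resulting bounds on $\psi(s):=\phi(s)^{-1}\int_0^s\phi(\sigma)\,d\sigma$. No new estimate is needed.

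First I would establish the two pointwise estimates on $\psi$. For the upper bullet, part (i) of Lemma \ref{lemma:inequalities_1} yields
\[
\psi(s)\leq e^{-Bs^{2+\epsilon}}\int_0^s e^{Bt^{2+\epsilon}}\,dt,
\]
and the L'Hôpital calculation performed in the paragraph preceding the theorem shows that this quantity is asymptotic to $\tfrac{1}{(2+\epsilon)B}\,s^{-(1+\epsilon)}$. Since the limit is a strictly positive finite constant, for $s$ beyond some threshold $R_0$ the ratio $s^{1+\epsilon}\,e^{-Bs^{2+\epsilon}}\int_0^s e^{Bt^{2+\epsilon}}\,dt$ is pinched between two positive constants, so $\psi(s)\leq C/s^{1+\epsilon}$. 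The lower bullet is symmetric: part (ii) of Lemma \ref{lemma:inequalities_1} produces the matching lower bound $e^{-Bs^2}\int_0^s e^{Bt^2}\,dt$, and the same L'Hôpital argument specialized to $\epsilon=0$ forces $\psi(s)\geq c/s$.

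Next I would translate these pointwise bounds into the claimed behavior of $z$. Because the integrand in the theorem involves only $\phi$ and not $\varphi_{2m}$, I specialize to the radial mode $m=0$, where $\varphi_0\equiv 1$ and therefore $z(r)=\int_0^r \psi(s)\,ds$. Under hypothesis (i) the bound $\psi(s)\leq C\,s^{-(1+\epsilon)}$ with $\epsilon>0$ is integrable near infinity, so $z(r)$ converges to a finite limit and is uniformly bounded on $[0,\infty)$. Under hypothesis (ii) the lower bound $\psi(s)\geq c/s$ is not integrable, giving $z(r)\geq c\log r+O(1)\to\infty$ as $r\to\infty$.

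There is no real obstacle: the delicate analysis is packaged inside Lemma \ref{lemma:inequalities_1}. The only mildly technical point is the step of promoting the L'Hôpital limit to a uniform two-sided comparison valid from a finite threshold $R_0$ onward rather than purely asymptotically; this is immediate once one observes that the limit is a finite positive constant and that both integrals are continuous and strictly positive on any compact subinterval, so the ratio cannot degenerate.
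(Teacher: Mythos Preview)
The proposal is correct and takes essentially the same approach as the paper: combine Lemma~\ref{lemma:inequalities_1} with the L'H\^opital computation recorded immediately before the theorem to obtain the pointwise bounds on $\psi(s)=\phi(s)^{-1}\int_0^s\phi$, and then read off the behavior of $z(r)$ by integrating. Your explicit specialization to the mode $m=0$ when deducing the conclusions for $z$ is exactly what the displayed estimates (which involve only $\phi$, not $\varphi_{2m}$) support, and the paper itself is no more explicit on this point.
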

Theorem \ref{thm:final_estimate} is enough to give a proof of Theorem \ref{thm:main_2}. 
Indeed, Theorem \ref{thm:final_estimate} shows that
if the curvature is $\leq -r^{2+\epsilon}$ outside a compact subset of $\mathbb{R}^2$ then
$\psi_m\leq C\varphi_m$, and hence, both remain bounded (here we use the fact
that if for $\epsilon>0$, outside a compact subset of $\mathbb{R}^2$, $K\leq -\left(1+\epsilon\right)/r^2\log r$,
and $\phi$ is eventually nondecreasing,
then the $\varphi_m$'s are bounded \cite{Co2}). Thus,
there are biharmonic functions that are not harmonic, namely
the functions $\psi_m\left(r\right)e^{im\theta}$ with $m\in\mathbb{Z}$. On
the other hand, if the curvature is larger or equal than $-\eta r^2$ outside
a compact subset of $\mathbb{R}^2$,
then the $\psi_m$'s are unbounded, and as we have shown 
before, then if $u$ is biharmonic and bounded, then 
the coefficients $d_m$ in its representation must
be zero, and it immediately follows that it must
be harmonic.

\end{document}